\tikzstyle{vertex}=[circle, draw, inner sep=0pt, minimum size=6pt]
\newcommand{\vertex}{\node[vertex]}
\theoremstyle{plain}
\newtheorem{theorem}{Theorem}[section]
\newtheorem{lemma}[theorem]{Lemma}
\newtheorem{corollary}[theorem]{Corollary}
\theoremstyle{definition}
\def \R {\mathbb{R}}
\def \C {\mathbb{C}}
\def \Z {\mathbb{Z}}
\def \GF {{\mbox {GF}(2)}}
\begin{document}

\title{Equivalence of edge bicolored graphs on surfaces}
\date{\today}

\author[O. T. Dasbach]{Oliver T. Dasbach}
 \address{Department of Mathematics, Louisiana State University,
 Baton Rouge, LA 70803}
\email{dasbach@math.lsu.edu}
\thanks {The first author was supported in part by NSF grant DMS-1317942.}
 
\author[H. M. Russell]{Heather M. Russell}
\address{Department of Mathematics\\Jepson Hall\\University of Richmond, Richmond, VA 23173}
\email{hrussell@richmond.edu}

\begin{abstract} 
Consider the collection of edge bicolorings of a graph that is cellularly embedded on an orientable surface. In this work, we count the number of equivalence classes of such colorings under two relations: reversing colors around a face and reversing colors around a vertex. In the case of the plane, this is well studied, but for other surfaces, the computation is more subtle. While this question can be stated purely graph theoretically, it has interesting applications in knot theory.
\end {abstract}
 
\maketitle

%% Paper Notation
\def \R {\mathbb{R}}
\def \C {\mathbb{C}}
\def \Z {\mathbb{Z}}

\section{Introduction}

Consider the following game for a connected, finite graph cellularly embedded (i.e. every face is a disk) on a compact orientable surface. Given a coloring of the edges by two colors, a finite number of two kinds of moves are allowed: (1) Reverse the colors of all the edges connected to some vertex, and (2) Reverse the colors of all the edges bounding some face. The goal is to find the number of equivalence classes of colorings under these moves. 

This problem arises in the context of knot theory. Given a checkerboard coloring of a link diagram lying on the sphere or more generally on a compact orientable surface, a checkerboard graph of the diagram has a natural edge coloring coming from crossing data. Our moves on this graph correspond exactly to region crossing changes which reverse all crossings bordering some (black or white) region of the diagram. 

Recent work of Ayaka Shimizu \cite{Shimizu:RegionChanges} shows that every knot diagram on a sphere can be transformed to one of the unknot by a sequence of region crossing changes. This is proven directly on the level of knot diagrams.  Cheng Zhiyun and Gao Hongzhu \cite{ChengGao:RegionChanges} investigate region crossing changes for links of two components using graph theory and linear algebra. Cheng Zhiyun \cite{Cheng:RCCLinks} extends this work to links of $n$ components giving necessary and sufficient linking number conditions for a link to lie in the same equivalence class as the unlink. 

Every graph cellularly embedded on the sphere is the checkerboard graph of a knot or link, so the results of Shimizu and Cheng-Gao answer our question in the genus zero case. However, we will show that it also follows from previously known results. The number of equivalence classes is given by the absolute value $|T(-1,-1)|$ of the Tutte polynomial $T(x,y)$ at $(x,y)=(-1,-1)$ \cite{RosenstiehlRead:EdgeTripartition}. When a graph arises as a checkerboard graph of a diagram of a link $L$, the absolute value $|T(-1,-1)|$ equals the absolute value of the Jones polynomial of $L$ at $1$. The number of equivalence classes is given by $2^{c-1}$ where $c$ is the number of components of $L$. Moreover, representatives for the equivalence classes can be read off from $L$.

For graphs on orientable surfaces of higher genus, one might hope that the equivalence classes are counted by the generalization of the Tutte polynomial to graphs on surfaces: the Bollob\'as-Riordan-Tutte (BRT) polynomial \cite{BR:BRTorientable, DFKLS:GraphsOnSurfaces}. We will show the BRT polynomial does indeed count the components of a link on a surface. However, it turns out that the  problem of counting equivalence classes does not just depend on the genus of the surface and an evaluation of the BRT polynomial of the graph. Instead, the number of equivalence classes can be computed by finding the kernel of a certain map on homology.

We start with a motivating example of a graph embedded on a torus in Section \ref{section:example}. Section \ref{section:medial graph} looks at the connection between graphs and links defining a subspace of the intersection of the cycle space of the graph and its dual whose dimension is measured by the Bollob\'as-Riordan-Tutte polynomial. In Section \ref{section:plane} we discuss the game for plane graphs, and Section \ref{section:surfaces} covers the case for graphs on orientable compact surfaces of arbitrary genus. 

We have given an intuitive description of the two moves of the game in this introduction. The precise description of the moves, given in the statement of Theorem \ref{Thm:PlaneGraphs}, is slightly more complicated in order to allow for possible loops and bridges in $G$. Note that Zaslavsky \cite{Zaslavsky:SignedGraphs} investigates a related problem for signed graphs called switching equivalence in which only one of our two moves is allowed.

\subsection{Acknowledgement: } We are grateful to Adam Lowrance, Allison Henrich, Neal Stoltzfus, and Sergei Chmutov for numerous discussions and suggestions, and to Dan Silver for introducing us to the bicycle space.

\section{Example} \label{section:example}

\begin{figure}
\begin {center}
\begin{tikzpicture}
 \GraphInit[vstyle=Art]
\draw [dashed] (0,0) -- (6,0) -- (6,6) -- (0,6) -- (0,0);
\vertex (p1) at (2,2) {};
\vertex (p2) at (2,4) {};
\vertex (p3) at (4,2) {};
\vertex (p4) at (4,4) {};
\Edge [label=$2$, labelstyle=left](p1)(p2);
\Edge [label=$3$, labelstyle=above](p2)(p4);
\Edge [label=$4$, labelstyle=left](p4)(p3);
\Edge [label=$1$, labelstyle=above](p3)(p1);
\Edge [label=$7$, labelstyle=above](0,2)(p1);
\Edge [label=$5$, labelstyle=left](p1)(2,0);
\Edge [label=$8$, labelstyle=above](p2)(0,4);
\Edge [label=$5$, labelstyle=left](p2)(2,6);
\Edge [label=$6$, labelstyle=left](p3)(4,0);
\Edge [label=$7$, labelstyle=above](p3)(6,2);
\Edge [label=$6$, labelstyle=left](p4)(4,6);
\Edge [label=$8$, labelstyle=above](p4)(6,4);
\Edge [label=$9$, labelstyle={above=6pt}](p1)(p4);
\end{tikzpicture}
\caption{Graph with four vertices and nine edges on a torus \label{fig:TicTacToeGraph}}
\end{center}
\end{figure}
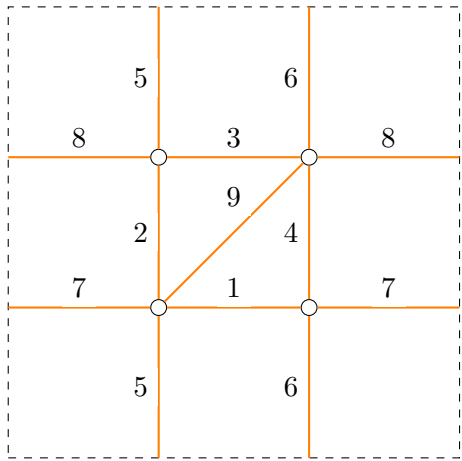

We discuss the mathematics of the game in an example. 
Figure \ref{fig:TicTacToeGraph} shows a graph $G=(V,E)$ with four vertices and eight edges embedded on a plane model of a torus. The torus is obtained by identifying the top with the bottom dotted edge of the square  and the left with the right dotted edge. 

We translate the game into a problem of determining the dimension of a vector space over $\GF = \mathbb{Z}/2\mathbb{Z}$, the field with two elements $0$ and $1$. We fix an ordering of the edges of $G$ (and hence $G^*$) and use it implicitly. Assigning, say, $0$ to the color blue and $1$ to the color red, we see that bicolorings of $G$ are in bijection with vectors in $\GF^{|E|}$. Given some coloring vector $w\in \GF^{|E|}$, switching colors around a vertex of $G$ corresponds to adding a row of the incidence matrix of $G$ to $w$. The row vectors of the incidence matrix form a subspace $U$ of $\GF^{|E|}$ of dimension $|V|-1$ (see e.g. \cite{GodsilRoyle:AlgebraicGraphTheory}, Lemma 14.15.1). This subspace $U$ is called the cocycle space (or cut space) of $G$.

For our example graph $G$ in Figure \ref{fig:TicTacToeGraph}, the incidence matrix $\mathcal I$ over $\GF$  is

$${\mathcal I}=\left(
\begin{array}{ccccccccc}
 1 & 1 & 0 & 0 & 1 & 0 & 1 & 0 & 1\\
 1 & 0 & 0 & 1 & 0 & 1 & 1 & 0 & 0\\
 0 & 0 & 1 & 1 & 0 & 1 & 0 & 1 & 1\\
 0 & 1 & 1 & 0 & 1 & 0 & 0 & 1 & 0\\
\end{array}
\right).
$$
Since the graph has $|V|=4$ vertices the rank over $\GF$ is $|V|-1=3$. Any row is linearly dependent on the other rows, so the cocycle space $U$ is the subspace of $\GF^9$ generated by the first three rows of $\mathcal I$.

The graph has five regions in its embedding, thus the dual graph $G^* = (V^*,E^*)$ has five vertices. The incidence matrix of the dual graph in this case is 
$${\mathcal I}^*=\left(
\begin{array}{cccccccccc}
 1 & 0 & 0 & 1 & 0 & 0 & 0 & 0 & 1 \\
 0 & 1 & 1 & 0 & 0 & 0 & 0 & 0 & 1 \\
 1 & 0 & 1 & 0 & 1 & 1 & 0 & 0 & 0 \\
 0 & 1 & 0 & 1 & 0 & 0 & 1 & 1 & 0 \\
 0 & 0 & 0 & 0 & 1 & 1 & 1 & 1 & 0 \\
\end{array}
\right).
$$
The edges in $E$ and $E^*$ are in $1$-$1$ correspondence, and thus $|E|=|E^*|$. Here, and throughout our discussion, we pick the same ordering on $E$ and $E^*$ and use this when considering rows of both ${\mathcal{I}}$ and ${\mathcal{I}}^*$ as vectors in $\GF^{|E|}$. 

Adding rows of ${\mathcal I}^*$ to a coloring vector corresponds to switching colors around faces of the graph. Since the dual graph has five vertices, its cocycle space $U^*$ is $4$-dimensional. Any vertex and face in $G$ are incident to either two or zero common edges. This means $U^*$ is orthogonal to $U$, and thus $U^* \subseteq U^{\perp}.$ The subspace $U^{\perp} \subseteq \GF^{|E|}$ is called the cycle space (or flow space) of $G$. The dimensions of the cycle and cocycle spaces are related (e.g. \cite{GodsilRoyle:AlgebraicGraphTheory}) by the formula
$$\dim U + \dim U^{\perp} = |E|.$$
The intersection $U \cap U^{\perp}$ is called the bicycle space of $G$. Its dimension $b$ is given by the Tutte polynomial at $(-1,-1)$ \cite{RosenstiehlRead:EdgeTripartition}: 
$$|T_G(-1,-1)|=2^b.$$
(We give a formula for the Tutte polynomial via a specialization of the Bollob\'as-Riordan-Tutte polynomial in the next section.)

For our problem, we are interested in the dimension of $\GF^{9}/(U + U^*)$. Since $$\dim (U+U^*)= \dim U + \dim U^*-\dim U \cap U^*$$ we need to determine the dimension of $U \cap U^*$. We know that $U^* \subseteq U^{\perp}$, and thus the space $U \cap U^*$ is a subspace of the bicycle space. For our example in Figure \ref{fig:TicTacToeGraph},  the dimension of $U+U^*$ turns out to be $6$, and hence the dimension of $U \cap U^*$ is $1$.
Since the codimension of $U+U^*$ in $\GF^9$ is $3$ there are $2^3$ different equivalence classes for our game.

The construction can be interpreted within the setting of the balanced overlaid Tait (BOT) graph (e.g. \cite{CDR:Dimers,RussellEtAl:DehnColoring}):
For the graph $G$ and its dual graph $G^*$ chose a vertex $v_0$ in $G$ and an adjacent vertex $v_0^*$ in $G^*$.
Let $\mathcal I$ and $\mathcal I^*$ be the incidence matrices of graph $G$ and its dual over $\GF$. In the matrices we  remove a row corresponding to $v_0$ in $G$ and to $v_0^*$ in $G^*$. We form a new matrix $A$ by combining the rows of $\mathcal I$ and $\mathcal I^*$ excluding the two removed rows. This matrix can be interpreted in a different way. 

If we lay $G$ and $G^*$ on top of each other by inserting a new vertex for every intersection of an edge in $G$ with the corresponding edge in $G^*$, this forms the BOT graph. We remove $v_0$ and $v_0^*$ and all of its adjacent edges. By construction we obtain a tripartite graph. Figure \ref{fig:BOT} gives an example. The matrix $A$ describes the adjacencies of vertices in the BOT graph among the bipartition of the vertices. 

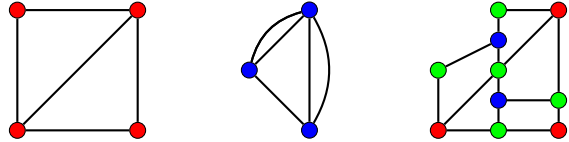
\begin{figure}[h]
\begin {center}
\begin{tikzpicture}[scale=0.4]
\vertex[fill=red] (p1) at (0,0) {};
\vertex[fill=red] (p2) at (4,0) {};
\vertex[fill=red] (p3) at (4,4) {};
\vertex[fill=red] (p4) at (0,4) {};
\Edges (p1,p2,p3,p4,p1);
\Edge (p1)(p3);
\end{tikzpicture}
\hspace {1cm}
\begin{tikzpicture}[scale=0.4]
\vertex[fill=blue] (q1) at (3,1) {};
\vertex[fill=blue] (q2) at (1,3) {};
\vertex[fill=blue] (q3) at (3,5) {};
\Edge (q1)(q2);
\Edge[style={bend left}](q2)(q3);
\Edge (q2)(q3);
\Edge[style={bend left}](q2)(q3)
\Edge (q1)(q3)
\Edge[style={bend right}](q1)(q3)
\end{tikzpicture}
\hspace {1cm}
\begin{tikzpicture}[scale=0.4]
\vertex[fill=red] (p1) at (0,0) {};
\vertex[fill=red] (p2) at (4,0) {};
\vertex[fill=red] (p3) at (4,4) {};
\vertex[fill=blue] (q1) at (2,1){};
\vertex[fill=blue] (q2) at (2,3){};
\vertex[fill=green] (r1) at (2,0){};
\vertex[fill=green] (r2) at (4,1){};
\vertex[fill=green] (r3) at (2,2){};
\vertex[fill=green] (r4) at (2,4){};
\vertex[fill=green] (r5) at (0,2){};
\Edges(p1, r1, p2, r2, p3, r4, q2, r3, q1);
\Edges(r1, q1, r2);
\Edges(p1,r3,p3);
\Edges(p1,r5,q2);

\end{tikzpicture}
\caption{Graph $G$, its dual graph, and the overlay graph with two vertices and their adjacent edges removed.  \label{fig:BOT}}
\end{center}
\end{figure}

\section{The components of the medial graph of a graph on a surface}
\label{section:medial graph}

Consider a link projection on an orientable surface such that all faces of the projection are disks and one can color the faces of the projection with black and white in a checkerboard fashion. Two dual checkerboard graphs $G_b$ and $G_w$ that are embedded on the surface are constructed as follows: The vertices of $G_b$ (resp. $G_w$) correspond to the faces colored in black (resp. white), and two vertices are connected by an edge if and only if the corresponding faces are adjacent to a common crossing of the link projection:

\begin{center}
\begin{equation} \label{CrossingToEdge}
\begin{tikzpicture}[baseline=25]
\draw (0,0)--(2,2);
\draw (1,1.8) node{b};
\draw (1,0.2) node{b};
\draw (1.8,1) node{w};
\draw (0.2,1) node{w};
\draw (2,0)--(1.1,0.9);
\draw (0.90, 1.1)--(0,2);
\begin{scope}[xshift=2.6cm, baseline=25.4]
\draw [->,
line join=round,
decorate, decoration={
    zigzag,
    segment length=4,
    amplitude=.9,post=lineto,
    post length=2pt
}]  (0,1) -- (1,1);
\end{scope}
\begin {scope}[xshift=3.4cm, baseline = 25]
\vertex [label={$b$}](p1) at (1,1.8) {};
\vertex [label={[yshift=-0.8cm] $b$}](p2) at (1,0.2) {};
\Edge(p1)(p2);
\draw (p1) -- (p2);
\end{scope}
\end{tikzpicture}
\end{equation}
\end{center}

Neither $G_b$ nor $G_w$ can be used to recover the original link diagram since crossing changes do not change the two checkerboard graphs. However, for any graph $G$ with a cellular embedding on a surface one can construct a unique alternating link diagram $D_L$ such that $G$ is one of the checkerboard graphs by reversing the arrow in (\ref{CrossingToEdge}). The link $L$ is called a {\it medial link} of $G$, and its underlying $4$-valent graph is called the {\it medial graph} of $G$. Since the medial graph comes from flattening a link diagram, it can be viewed as a collection of closed curves on the surface. In this sense, the number of components of the medial graph is well-defined. Figure \ref{Medial Graph} gives an example where the number of components is $3$.

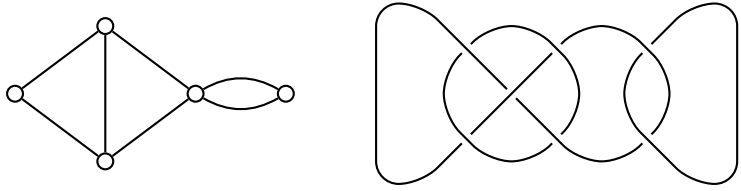
\begin{figure}[h]
\begin {center}
\begin{tikzpicture} [thick,scale = .6]
\vertex (p1) at (0,0){};
\vertex (p2) at (2,-1.5){};
\vertex (p3) at (2,1.5){};
\vertex (p4) at (4,0){};
\vertex (p5) at (6,0){};
\Edges (p1, p2,p3,p1);
\Edges (p2, p4, p3);
\Edge[style={bend right}](p4)(p5);
\Edge[style={bend right}](p5)(p4)

\begin{scope}[xshift = 9cm,yshift=-2cm, rounded corners = 3mm ]
\draw (.9,.9) -- (0,0) -- (-1,0) -- (-1,4) -- (0,4) -- (1.9,2.1);
\draw (2.9,2.9) -- (1.1,1.1);
\draw (.9,2.9) -- (0.5,2.5) --(.5,1.5)  -- (1.5,.5) -- (2.5,.5) -- (2.9,.9);
\draw (3.1,1.1) -- (3.5,1.5) -- (3.5,2.5) -- (2.5,3.5) -- (1.5,3.5) -- (1.1,3.1);
\draw (2.1,1.9) -- (3.5,.5) -- (4.5,.5) -- (4.9,.9);
\draw (5.1,1.1) -- (5.5,1.5) -- (5.5,2.5) -- (4.5,3.5) -- (3.5,3.5) -- (3.1,3.1);
\draw (5.1,3.1) -- (6,4) -- (7,4) -- (7,0) -- (6,0) -- (4.5,1.5) -- (4.5,2.5) -- (4.9,2.9);
\end{scope}
\end{tikzpicture}
\end{center}
\caption{A graph $G$ and the alternating diagram $D_L$ of a link with three components}
\label{Medial Graph}
\end{figure}

\subsection{The Bollob\'as-Riordan-Tutte polynomial}
Recall that, for a graph which is cellularly embedded on the sphere, the Tutte polynomial counts the number of components of the medial graph, and this data provides an easy way to count equivalence classes in our game for graphs on spheres. 

Our goal in this subsection is to show the Bollob\'as-Riordan-Tutte (BRT) polynomial \cite{BR:BRTorientable}  for graphs with a cellular embedding on an orientable surface counts the number of components in the medial graph. We will later show that the number of equivalence classes in our game on arbitrary surfaces unfortunately does not have a simple dependency on this count.

We begin with a definition of the BRT polynomial. Let $v(G), e(G), f(G)$, and $k(G)$ be the number of vertices, edges, faces and connected components of a graph embedded on a surface. The nullity $n(G)$ is $n(G) = e(G)-v(G)+k(G)$, and the genus $g(G)$ is
$$g(G)=\frac{2 k(G)-v(G)+e(G)-f(G)} 2.$$

The BRT polynomial $BRT_G(x,y,z)$ is defined as:
$$BRT_G(x,y,z)= \sum_{H \subseteq G} x^{k(H)-k(G)} y^{n(H)} z^{g(H)}.$$
Here the sum is over all spanning subgraphs of $G$. We slightly departed from the original definition by changing the variable from $x$ to $x+1$. The relation to the Tutte polynomial is given \cite{BR:BRTorientable} by
$$BRT_G(x-1,y-1,1) = T_G(x,y).$$
Chmutov and Pak \cite{ChmutovPak:BollobasRiordan} showed that the Kauffman bracket of an alternating virtual link can be interpreted as an evaluation of the BRT polynomial.
In \cite{DFKLS:GraphsOnSurfaces} it is shown that the Jones polynomial of an arbitrary link can be seen as an evaluation of the BRT polynomial of a graph embedded on a surface that can be constructed from a plane diagram of the link. 
We will use those relations between link diagrams and the BRT polynomial below.

For plane graphs it is known that the Tutte polynomial determines the number of components of the medial graph (e.g. \cite{GodsilRoyle:AlgebraicGraphTheory}). For a graph on an arbitrary orientable surface we need the extension to the BRT polynomial:

\begin{theorem} \label{Thm:ComponentsMedialGraph}
Consider $BRT_G(x,y,z)$ for a graph embedded on an orientable surface $\Sigma$. Construct the link diagram $D_L$ on the surface $\Sigma$ with checkerboard graph $G$ by reversing the arrow in (\ref{CrossingToEdge}). Then the number of components $c$ of $D_L$ is counted by the BRT polynomial as:
$$|BRT_G(-2,-2,1/4)| = 2^{c-1}.$$ 
\end{theorem}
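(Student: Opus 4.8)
The plan is to establish the exact identity
$$BRT_G(-2,-2,1/4)=\langle D_L\rangle_{A=1}$$
for connected $G$, where $\langle\,\cdot\,\rangle_{A=1}$ denotes the Kauffman bracket of the medial diagram $D_L$ evaluated at $A=1$ (normalized so that a single loop has bracket $1$), and then to show that this quantity equals $\pm 2^{c-1}$. Taking absolute values yields the theorem. The point of passing through the bracket is that the variable $z=1/4$ is tuned precisely so that the genus contributions $z^{g(H)}=2^{-2g(H)}$ recombine with the nullity contributions into a clean power of two; on the sphere every $g(H)=0$, the value of $z$ is irrelevant, and the statement degenerates to the classical $|T_G(-1,-1)|=2^{c-1}$.

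First I would carry out the exponent bookkeeping directly in the defining state sum. Writing each summand of $BRT_G(-2,-2,1/4)$ as $(-2)^{k(H)-k(G)}(-2)^{n(H)}(1/4)^{g(H)}$ and substituting $n(H)=e(H)-v(H)+k(H)$ together with $2g(H)=2k(H)-v(H)+e(H)-f(H)$, one finds $n(H)-2g(H)=f(H)-k(H)$, so the power of two in each term collapses to $f(H)-k(G)$ and the sign is $(-1)^{e(H)+v(G)+k(G)}$ (using $v(H)=v(G)$ for spanning subgraphs). The one elementary identity needed is that the genus formula forces $f(H)\equiv e(H)+v(G)\pmod 2$ for every spanning $H$, since $f(H)-e(H)=2k(H)-v(H)-2g(H)$. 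This lets me trade the $H$-dependent factor $(-1)^{e(H)}$ for $(-1)^{f(H)}$ at the cost of a global sign, so that each term becomes $(-1)^{k(G)}2^{-k(G)}(-2)^{f(H)}$ and, for connected $G$, $BRT_G(-2,-2,1/4)=\sum_{H\subseteq G}(-2)^{f(H)-1}$.

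The substantive step is to recognize $\sum_{H\subseteq G}(-2)^{f(H)-1}$ as $\langle D_L\rangle_{A=1}$. Spanning subgraphs $H$ are in bijection with Kauffman states of $D_L$: at the crossing produced from an edge $e$ by reversing the arrow in the medial construction, keeping $e$ records one smoothing and deleting it records the other. At $A=1$ each smoothing carries weight $1$ and each resulting loop contributes the loop value $-A^2-A^{-2}=-2$, so the bracket is exactly $\sum_{s}(-2)^{|s|-1}$. What must be proved is that the number of loops $|s|$ in the state attached to $H$ equals the number of faces $f(H)$ (boundary components) of the spanning ribbon subgraph $(V(G),H)$. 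This is the first place where the genus of $\Sigma$ genuinely enters: for plane graphs it is the classical medial/Tait correspondence, but on a higher-genus surface one must count boundary components inside the ribbon-graph structure, which is precisely the content supplied by the Chmutov--Pak and DFKLS interpretation of the bracket as a Bollob\'as--Riordan evaluation.

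Finally I would evaluate $|\langle D_L\rangle_{A=1}|=2^{c-1}$. For a classical link this is the well-known value $|V_L(1)|=2^{c-1}$, whose skein proof is purely local (at $t=1$ the Jones relation degenerates to $V_+=V_-$). I expect this last step to be the main obstacle, because $D_L$ is a link on $\Sigma$ rather than in $S^3$, so the classical Jones-at-$1$ theorem does not apply verbatim and one cannot simply reduce $D_L$ to an unlink by crossing changes. I would handle it either by checking that the $t=1$ skein relation, being local, still forces $\langle D_L\rangle_{A=1}$ to depend only on the number of components, or by a direct state-sum argument identifying the parity and magnitude of $\sum_s(-2)^{|s|-1}$ with a rank computation over $\GF$ for the interlacement data of the medial diagram, linking the loop counts to $c$ in the spirit of the bicycle-space viewpoint used elsewhere in the paper. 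Once this evaluation is in hand, combining it with the identity of the first paragraph completes the proof.
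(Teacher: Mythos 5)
Your proposal is correct and is essentially the paper's own proof. The paper obtains the identity $BRT_G(-2,-2,1/4)=\langle D_L\rangle_{A=1}=\sum_{H\subseteq G}(-2)^{f(H)-1}$ by quoting the Chmutov--Pak/DFKLS specialization $(x,y,z)=(-A^4-1,-1-A^{-4},1/(-A^2-A^{-2})^2)$ and setting $A=1$; your paragraph of exponent bookkeeping (I checked it: $n(H)-2g(H)=f(H)-k(H)$, the parity $f(H)\equiv e(H)+v(G)\pmod 2$, and the collapse to $(-2)^{f(H)-1}$ for connected $G$ are all right) is a correct, self-contained verification of the same identity, and your identification of state loops with boundary components $f(H)$ rests on the same references the paper cites. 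The one place you stop short -- evaluating $|\langle D_L\rangle_{A=1}|=2^{c-1}$ -- is resolved by the first of your two alternatives, which is exactly what the paper does, and your worry about it is unfounded: the induction proceeds by smoothing a crossing (so the crossing number drops), not by crossing changes, so no reduction of $D_L$ to an unlink on $\Sigma$ is ever needed; moreover the $A=1$ skein relation is immediate from splitting the state sum at one crossing, so no isotopy theory of diagrams on $\Sigma$ enters. The key point, which makes the induction surface-independent, is that the effect of a smoothing on the number of components is a chord-diagram fact: if the two strands at the crossing lie on distinct components, both smoothings merge them ($c\mapsto c-1$), while if they lie on one component, one smoothing splits it ($c\mapsto c+1$) and the other preserves $c$. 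Writing $\nu$ for the number of crossings, the claim $\langle D_L\rangle_{A=1}=(-1)^{\nu}(-2)^{c-1}$ then closes inductively via $(-1)^{\nu-1}\bigl((-2)^{c-2}+(-2)^{c-2}\bigr)=(-1)^{\nu}(-2)^{c-1}$ in the first case and $(-1)^{\nu-1}\bigl((-2)^{c-1}+(-2)^{c}\bigr)=(-1)^{\nu}(-2)^{c-1}$ in the second, with crossingless diagrams (where $c$ loops contribute $(-2)^{c-1}$ directly from the state sum) as the base case; your second, interlacement-based alternative is not needed.
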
 
\begin{proof}
As in \cite{ChmutovPak:BollobasRiordan, DFKLS:GraphsOnSurfaces, DFKLS:DDD} we consider the specialization of the BRT polynomial given by $(x,y,z)=(-A^4-1, -1-A^{-4}, 1/(-A^2-A^{-2})^2).$ 
Recall the slight change of variables from $x$ to $x+1$ from the original definition of the BRT polynomial. 
This defines a Laurent polynomial $\langle D_L \rangle$ in $A$ and $A^{-1}$ by
$$A^{-e(G)} \langle D_L \rangle = A^{2-2 v(G)} BRT_G(-A^4-1, -1-A^{-4}, 1/(-A^2-A^{-2})^2).$$

Thus,
$$\langle D_L \rangle = \sum_{H \subseteq G} A^{e(G)-2 e(H)} (-A^2-A^{-2})^{f(H)-1}.$$

Locally with respect to the link diagram $D_L$ the Laurent polynomial $\langle D_L \rangle$ has the following property \cite{ChmutovPak:BollobasRiordan}:

\begin{equation}
\label{skein relation}
\left \langle
\begin{tikzpicture}[scale=0.4, baseline=9.0]
\draw (0,0)--(2,2);
\draw (2,0)--(1.1,0.9);
\draw (0,2)--(0.9,1.1);
\end{tikzpicture}
\right \rangle=
A \left \langle
\begin{tikzpicture}[scale=0.4, baseline=9.0]
\draw (0,0)to [bend right] (0,2);
\draw (2,0) to [bend left] (2,2);
\end{tikzpicture}
\right \rangle+
A^{-1} \left \langle
\begin{tikzpicture}[scale=0.4, baseline=9.0]
\draw (0,0)to [bend left] (2,0);
\draw (0,2) to [bend right] (2,2);
\end{tikzpicture}
\right \rangle,
\end{equation}
and $$\left \langle
\begin{tikzpicture}[scale=0.4, baseline=-2.0]
\draw (0,0) circle (1cm);
\end{tikzpicture} 
\right \rangle=1.
$$

We are interested in its value at $A=1$. 

We will show that if $L$ has $c$ components, then $\langle D_L \rangle_{A=1} = (-1)^{\nu} (-2)^{c-1},$ where $\nu$ is the number of crossings in the diagram $D_L$. In particular this holds for a diagram of the unknot without crossings.
By induction on the number of crossings, if the two strands in the link diagram on the left-hand side of Equation (\ref{skein relation}) are on two different components of the link, then the two links on the right-hand side have one component less, and we verify:
$$(-1)^{\nu} (-2)^{c-1}=(-1)^{\nu-1} \left ( (-2)^{c-2} + (-2)^{c-2} \right ).$$
If the two strands on the left-hand side of Equation (\ref{skein relation}) are on the same component of the link then one of the links on the right-handside has one component more, while the other one has the same number of components. This verifies:
$$(-1)^{\nu} (-2)^{c-1}=(-1)^{\nu-1} \left ( (-2)^{c-1} + (-2)^{c} \right ).$$
\end{proof}

\subsection{Example}

The graph on the torus in Figure \ref{fig:TicTacToeGraph} has BRT polynomial \cite{vAult:BRTSage}:
\begin{eqnarray*}
BRT_G(x,y,z)&=&x^3+4 x^2 y+9 x^2+6 x y^2+36 x y+32 x+2
   y^4+16 y^3+60 y^2+112 y+48\\
   && +
   \left(2 x y^3+8 x y^2+y^6+9
   y^5+34 y^4+68 y^3+64
   y^2\right) z
\end{eqnarray*}

We have $BRT_G(-2,-2,1/4)=-4$. Therefore, by Theorem \ref{Thm:ComponentsMedialGraph} the medial graph has $3$ components.

\subsection{Components of the medial graph as cycles} \label{Space P}

Each cycle in $G$ (or $G^*$) corresponds to a unique vector in $\GF^{|E|}$. Given such a cycle, the associated vector has a 1 in the position of each edge occurring an odd number of times in the cycle and zeroes elsewhere. Similarly, each subset of $E$ (or $E^*$) will be identified with the vector in $\GF^{|E|}$ having a 1 in the position of an edge if and only if it is present in the subset. Using these two constructions, we conflate the notions of cycles in $G$ and $G^*$ and subsets of $E$ and $E^*$ with their corresponding vectors in $\GF^{|E|}$. 

Let $k_1, \dots, k_c$ be the components of the medial graph for $G$. Since the medial graph of a graph $G$ equals the medial graph of its dual $G^*$, the edges intersected when tracing along a component $k_i$ of the medial graph form a cycle $v_i$ in both $G$ and $G^*$. Note that an edge could be intersected by a component twice, and in that case, the edge would appear twice in the cycle (and thus would have a 0 in that edge's entry of $v_i\in \GF^{|E|}$). Then for $i=1,\ldots, c$, we conclude $v_i \in U^{\perp} \cap (U^{*})^{\perp},$ where $U^{\perp}$ is the cycle space of $G$ and $(U^*)^{\perp}$ is the cycle space of $G^*$.

The following is a straightforward generalization from results for plane graphs (see Section 17.3. of \cite{GodsilRoyle:AlgebraicGraphTheory}) :

\begin{lemma} \label{Generators for P}
Let $G=(V,E)$ be a connected graph embedded on an oriented surface. Let $k_1, \dots, k_c$ be the components of the medial graph, with vectors $v_1, \dots, v_c$ as above. Then
\begin{enumerate}[(i)]
\item Each edge of $G$ occurs in an even number of the vectors $v_i$, and no proper subset of the set $\{ v_1, \dots, v_c \}$ covers every edge in $G$ an even number of times.
\item Each vector is the sum of all of the other vectors.
\item The subspace $\mathcal P$ of $\GF^{|E|}$ generated by $\{ v_1, \dots, v_c \}$ has dimension $c-1$.
\end{enumerate}
\end{lemma}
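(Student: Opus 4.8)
The plan is to reduce everything to a single local observation about crossings together with one global connectivity fact. For each edge $e \in E$, let $t_{i,e}$ denote the number of times the medial component $k_i$ passes through the crossing of $D_L$ corresponding to $e$, and write $v_i[e]$ for the $e$-th coordinate of $v_i$, so that by definition $v_i[e] = t_{i,e} \bmod 2$. The key local fact is that exactly two strands meet at every crossing, whence $\sum_i t_{i,e} = 2$ for every edge $e$. Since this total is even, an even number of the integers $t_{i,e}$ are odd; that is, each edge carries a $1$ in an even number of the $v_i$, which is the first assertion of (i) and is equivalent to the single relation $\sum_{i=1}^c v_i = 0$ in $\GF^{|E|}$. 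Statement (ii) is then immediate, since over $\GF$ this rearranges to $v_j = \sum_{i \ne j} v_i$ for each $j$. For (iii), I would observe that the span $\mathcal P$ is the image of the map $\GF^c \to \GF^{|E|}$, $\lambda \mapsto \sum_i \lambda_i v_i$, so that $\dim \mathcal P = c - \dim R$ where $R$ is its kernel; identifying a vector $\lambda \in R$ with the subset $S = \{i : \lambda_i = 1\}$, the relation $\sum_i v_i = 0$ shows $R \supseteq \{0, (1,\dots,1)\}$, and the second assertion of (i) is precisely the statement that $R$ contains nothing more, giving $\dim R = 1$ and hence $\dim \mathcal P = c-1$.

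So the whole lemma rests on the second half of (i): no nonempty proper subset $S \subsetneq \{1,\dots,c\}$ satisfies $\sum_{i\in S} v_i = 0$. To prove this I would introduce the crossing graph $\mathcal T$ on the vertex set $\{k_1,\dots,k_c\}$, placing an edge between $k_i$ and $k_j$ (for $i \ne j$) for each crossing of $D_L$ at which the two strands belong to these two distinct components. The decisive input is that $\mathcal T$ is connected. Granting this, suppose $S$ were a nonempty proper subset with $\sum_{i\in S} v_i = 0$. Connectivity of $\mathcal T$ forces an edge across the cut $(S, S^c)$, i.e.\ a crossing $e$ whose two strands lie on components $k_i$ with $i \in S$ and $k_j$ with $j \notin S$. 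At that crossing $t_{i,e} = t_{j,e} = 1$ and $t_{l,e} = 0$ for all other $l$, so reading off the $e$-th coordinate gives $\sum_{l \in S} v_l[e] = 1 \ne 0$, contradicting $\sum_{l\in S} v_l = 0$. Note that this argument only ever inspects a crossing joining two \emph{distinct} components, so the possibility of a single component running through a crossing twice causes no trouble here; it entered only the parity count above, where it was harmless.

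The main obstacle is therefore the connectivity of $\mathcal T$, which I would derive from connectivity of the medial graph $M$, the $4$-valent graph underlying $D_L$. For the passage from $M$ to $\mathcal T$: any walk in $M$ traverses a sequence of edges, each lying on a single component, and consecutive edges share a vertex of $M$ through which both of their components pass; hence along the walk the component changes only between $\mathcal T$-adjacent vertices, so connectedness of $M$ propagates to connectedness of $\mathcal T$. The remaining point, that $M$ is connected because $G$ is, is the step I expect to require the most care on a surface of positive genus: the clean argument is that a separation of $M$ would separate the black faces of $D_L$, which are exactly the vertices of $G$, contradicting connectivity of $G$ since every edge of $G$ is a crossing lying inside a single component of $M$. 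This is the surface analogue of the planar fact used in Section 17.3 of \cite{GodsilRoyle:AlgebraicGraphTheory}, and verifying it carefully, rather than the essentially formal deductions of (ii) and (iii), is where the real work lies.
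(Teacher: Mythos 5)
Your proof is correct, and it supplies exactly what the paper leaves implicit: the paper states this lemma with no proof at all, remarking only that it is ``a straightforward generalization'' of the plane-graph results in Section 17.3 of \cite{GodsilRoyle:AlgebraicGraphTheory}. Your argument is in substance that plane-graph argument transplanted to surfaces --- the local count $\sum_i t_{i,e}=2$ at each crossing gives $\sum_i v_i=0$ (handling the case of one component passing a crossing twice), and the absence of any further relation is, as you observe, exactly equivalent to connectivity of the component-adjacency graph $\mathcal T$ --- and your write-up makes visible \emph{why} the generalization is straightforward: nothing in it uses genus zero. In particular you rightly avoid the tempting plane-only shortcut (two closed curves on the sphere meet an even number of times, which already fails on the torus) and instead derive connectivity of $\mathcal T$ from connectivity of the medial graph $M$. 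For that last step, which you flag as the real work, there is a cleaner argument than your separation-of-black-faces sketch: the embedding cyclically orders the edges of $G$ incident to a fixed vertex $v$, and consecutive edges in this rotation are joined by an edge of $M$, so all crossings corresponding to edges at $v$ lie in one component of $M$; since adjacent vertices of $G$ share an edge, connectivity of $G$ chains these together, and every vertex of $M$ arises from some edge of $G$. (This is the precise form of your sketch, since the rotation at $v$ is the boundary walk of the black face of $D_L$ corresponding to $v$.) With that step made explicit your argument is complete; the deductions of (ii) and (iii) from (i) via the kernel of $\lambda\mapsto\sum_i\lambda_i v_i$ are correct as written.
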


\section{Plane graphs}
\label{section:plane}

The linear algebra discussed in the previous sections is particularly nice for graphs embedded on spheres (i.e. plane graphs). Since every cycle in such a graph bounds a disk, it follows that $U^*=U^{\perp}$, and thus $U \cap U^*=U\cap U^{\perp}$. In other words, for graphs on spheres $U \cap U^*$ is the bicycle space.

The following theorem gives an easy way to count the number of equivalence classes in our game for graphs on spheres. The two moves on bicolorings described here are more complicated than the ones given in the introduction. This is due to the fact that we allow our graphs to have loops and bridges. 

\begin{theorem} \label{Thm:PlaneGraphs}
Let $G=(V,E)$ be a finite, connected plane graph. The following two moves define an equivalence relation on the set of edge bicolorings of $G$. Two colorings are equivalent if and only if one can be obtained from the other by a finite sequence of the following two moves:
\begin{enumerate}
\item Around a vertex, switch the colors of all edges with exactly one endpoint at that vertex (i.e. all non-loops adjacent to the vertex).
\item Around a face of the embedded graph, switch all colors of the edges appearing exactly once in the boundary cycle of the face.
\end{enumerate}
Then the absolute value of the Tutte polynomial evaluation $|T(-1,-1)|$ yields the number of equivalence classes of colorings.
\end{theorem}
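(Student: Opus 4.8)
The plan is to reduce the counting problem to a single dimension computation over $\GF$, exactly as set up in Section \ref{section:example}, and then to evaluate that dimension using Euler's formula together with the identification of the bicycle space. First I would verify that the two moves correspond precisely to adding rows of the two incidence matrices. Encoding a bicoloring as a vector $w \in \GF^{|E|}$, move (1) at a vertex adds the corresponding row of the incidence matrix $\mathcal I$: over $\GF$ a loop at that vertex contributes $1+1=0$ to its row, so the loop's color is left unchanged, which is exactly why the move is phrased for non-loops. Dually, move (2) around a face adds the corresponding row of $\mathcal I^*$; a bridge of $G$ is a loop of $G^*$ and appears twice in the boundary cycle of the face, contributing $0$, matching the phrase ``edges appearing exactly once in the boundary cycle.'' Thus the set of colorings reachable from $w$ is exactly the coset $w + (U + U^*)$, where $U$ and $U^*$ are the cocycle spaces of $G$ and $G^*$. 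Being cosets of a subspace, these classes form an equivalence relation automatically (reflexivity via the empty sequence, symmetry since each move is an involution, transitivity by concatenating move sequences), and the number of classes is the index $[\GF^{|E|} : U + U^*] = 2^{\,|E| - \dim(U+U^*)}$.

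Next I would compute $\dim(U+U^*)$ by inclusion--exclusion,
$$\dim(U+U^*) = \dim U + \dim U^* - \dim(U \cap U^*).$$
Since $G$ is connected we have $\dim U = |V|-1$, and since $G^*$ is connected with $|F|$ vertices (one per face of $G$) we have $\dim U^* = |F|-1$. For the intersection I would invoke the fact, special to plane graphs and already recorded at the start of this section, that every cycle bounds a disk, so $U^* = U^{\perp}$ and hence $U \cap U^* = U \cap U^{\perp}$ is precisely the bicycle space. By the Rosenstiehl--Read result \cite{RosenstiehlRead:EdgeTripartition} its dimension $b$ satisfies $|T_G(-1,-1)| = 2^b$.

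Euler's formula then finishes the count. Substituting the three dimensions gives
$$|E| - \dim(U+U^*) = |E| - (|V|-1) - (|F|-1) + b = \bigl(|E| - |V| - |F| + 2\bigr) + b,$$
and for a connected plane graph $|V| - |E| + |F| = 2$, so the parenthesized term vanishes and the exponent equals $b$. Therefore the number of equivalence classes is $2^b = |T_G(-1,-1)|$, as claimed.

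I expect the only genuinely delicate point to be the first step: checking that the precise, loop- and bridge-sensitive wording of the two moves is exactly what makes them coincide with adding rows of $\mathcal I$ and $\mathcal I^*$ over $\GF$, rather than some coarser operation. Everything afterward is routine linear algebra. It is worth flagging that the argument uses $U^* = U^{\perp}$ in an essential way; this identity holds precisely because we are on the sphere, and its failure in higher genus (where one has only $U^* \subseteq U^{\perp}$) is exactly what forces the more refined homological analysis announced in the introduction and carried out in Section \ref{section:surfaces}.
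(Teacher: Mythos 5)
Your proof is correct and follows essentially the same route as the paper's: encode colorings as vectors in $\GF^{|E|}$ so the reachable colorings from $w$ form the coset $w+(U+U^*)$, then collapse the exponent $|E|-\dim(U+U^*)$ to $\dim(U\cap U^*)=b$ via inclusion--exclusion, Euler's formula, and the Rosenstiehl--Read identification of the bicycle space. The only cosmetic difference is that you justify $U^*=U^{\perp}$ by the geometric fact that plane cycles bound disks, whereas the paper's proof derives it from the dimension count $\dim U^*=|E|-\dim U$ combined with $U^*\subseteq U^{\perp}$; your explicit verification that the loop- and bridge-sensitive wording of the moves matches row addition in $\mathcal{I}$ and $\mathcal{I}^*$ fills in a step the paper leaves implicit.
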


\begin{proof}
As in the example in Section \ref{section:example},
 the problem reduces to computing the intersection of the cycle and the cocycle spaces of $G$. Using Euler characteristic, the number of equivalence classes in our game is $2^b$ where $b=|E| - \dim{(U+U^*)} = |E|-(\dim U+\dim U^*-\dim{(U\cap U^*)}) = 2g+\dim{(U\cap U^*)}$ where $g$ is the genus of the surface on which the graph is embedded. In the plane case then, the number of equivalence classes is simply $2^b$ where $b=\dim{(U\cap U^*)}$.
 
The row vectors of the incidence matrix for $G$ span the cocycle space $U\subseteq \GF^{|E|}$ which has dimension $|V|-1$ (see e.g. \cite{GodsilRoyle:AlgebraicGraphTheory}, Lemma 14.15.1).  The cocycle space $U^*$ of the dual graph $G^*=(V^*,E^*)$ has dimension $|V^*|-1$. Using Euler characteristic and duality, we see $\dim U^*=|V^*|-1=|E|-|V|+1 = |E|-\dim U$. Since $\dim U + \dim U^{\perp}=|E|$ and $U^*\subseteq U^{\perp}$,  it follows that $U^*=U^{\perp}$ and $U \cap U^*=U \cap U^{\perp}$. The claim follows since the dimension of the bicycle space $U\cap U^{\perp}$ is given by $b$ where $2^b=|T(-1,-1)|$ \cite{RosenstiehlRead:EdgeTripartition}. 
\end{proof}

Given a knot diagram on an orientable surface, a region crossing change (RCC) is the result of reversing all crossings incident to a region of the diagram. Shimizu \cite{Shimizu:RegionChanges} and Cheng-Gao \cite{ChengGao:RegionChanges} study equivalence classes of planar link diagrams under the RCC operation. Their results follow as a corollary of Theorem \ref{Thm:PlaneGraphs}.
\begin{corollary}
Suppose $D$ is a $c$ component link diagram in the plane. Then the number of equivalence classes of diagrams with the same shadow as $D$ under the RCC operation is $2^{c-1}$. In particular, every knot diagram can be transformed to one of the unknot via a finite sequence of RCC moves.
\end{corollary}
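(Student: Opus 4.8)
The plan is to translate the region-crossing-change (RCC) problem on diagrams into the edge-bicoloring game on a checkerboard graph, invoke Theorem \ref{Thm:PlaneGraphs}, and then evaluate the resulting Tutte number by routing through the BRT polynomial. First I would take the (connected) diagram $D$, checkerboard-color its regions, and form the connected plane graph $G=G_b$ whose vertices are the black regions and whose edges are the crossings of $D$ via the rule (\ref{CrossingToEdge}). The medial graph of $G$ is exactly the shadow of $D$, and since the components of the medial graph are the link components, its number of components is $c$. A diagram sharing the shadow of $D$ is precisely a choice of over/under datum at each crossing, hence a vector in $\GF^{|E|}$, i.e. an edge bicoloring of $G$.

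Next I would verify that RCC corresponds to the two moves of Theorem \ref{Thm:PlaneGraphs}. An RCC at a black region flips each crossing an odd number of times iff that crossing meets the region in exactly one of its two black corners; these are exactly the non-loop edges at the corresponding vertex, so this is move (1). An RCC at a white region is the dual statement for a face of $G$, giving move (2). The ``exactly once / exactly one endpoint'' clauses in the theorem are precisely what make this correspondence exact in the presence of loops and bridges (a crossing meeting a region in both black corners is flipped twice, hence unchanged). Since every region of $D$ is black or white, the RCC moves and the two graph moves generate the same equivalence relation, and Theorem \ref{Thm:PlaneGraphs} gives that the number of equivalence classes is $|T_G(-1,-1)|$.

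It then remains to show $|T_G(-1,-1)| = 2^{c-1}$, and here I would use Theorem \ref{Thm:ComponentsMedialGraph} rather than re-deriving the bicycle-space dimension. The key observation is that for a plane graph the BRT polynomial carries no genus information: every spanning sub-ribbon-graph $H$ of $G$ is realized as a subsurface of the genus-zero ribbon surface of $G$ by deleting bands, and deleting bands cannot create handles, so $g(H)=0$ for every $H$. Hence the $z$-variable is inert and $BRT_G(x,y,z)=BRT_G(x,y,1)=T_G(x+1,y+1)$ for plane $G$; in particular $BRT_G(-2,-2,1/4)=T_G(-1,-1)$. Combining this with Theorem \ref{Thm:ComponentsMedialGraph}, which gives $|BRT_G(-2,-2,1/4)|=2^{c-1}$ (the $c$ there being the number of components of the medial graph, hence our $c$), yields $|T_G(-1,-1)|=2^{c-1}$, so the number of equivalence classes is $2^{c-1}$. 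For the final sentence I would specialize to $c=1$: a knot shadow admits $2^{0}=1$ equivalence class, so every diagram on that shadow is RCC-equivalent to every other, and since any knot shadow supports an unknotted (e.g. ascending) diagram, the given knot diagram is RCC-equivalent to an unknot diagram.

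I expect the main obstacle to be the genus-independence step, namely making rigorous that a spanning sub-ribbon-graph of a planar ribbon graph is again planar, so that $g(H)=0$ for every spanning $H$; once this is established the remainder is bookkeeping. An alternative avoiding the BRT evaluation is to note that in the plane case $U^*=U^{\perp}$, so the bicycle space is $U\cap U^{\perp}=U^{\perp}\cap (U^*)^{\perp}$, which by Lemma \ref{Generators for P} contains the $(c-1)$-dimensional span $\mathcal P$ of the medial cycles; the reverse inclusion that these cycles exhaust the bicycle space on the sphere is the classical fact from Section 17.3 of \cite{GodsilRoyle:AlgebraicGraphTheory}. I would keep the BRT route as primary, since it stays self-contained within the results already proved here.
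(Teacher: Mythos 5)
Your proposal is correct and follows essentially the same route as the paper's proof: pass to a checkerboard graph, apply Theorem \ref{Thm:ComponentsMedialGraph} to identify the evaluation with $2^{c-1}$, and conclude via Theorem \ref{Thm:PlaneGraphs}. The only difference is that you usefully make explicit two steps the paper's one-line proof leaves implicit---that $g(H)=0$ for every spanning sub-ribbon-graph of a plane graph, so $BRT_G(-2,-2,1/4)=T_G(-1,-1)$, and that the RCC moves match the two vertex/face moves (including the loop and multiplicity conventions)---both of which you justify correctly.
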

\begin{proof}
Given a link diagram in the plane, consider either of its checkerboard graphs. By Theorem \ref{Thm:ComponentsMedialGraph}, the Tutte polynomial of this graph specializes to $|T(-1,-1)|=2^{c-1}$, and the result now follows by  Theorem \ref{Thm:PlaneGraphs}.
\end{proof}

\subsection{Representatives for the equivalence classes}

For connected plane graphs there is a natural way of choosing representatives for the equivalence classes of the color change game using the vectors defined in Section \ref{Space P}. 

%First we recall the background from Section 17.3 in \cite{GodsilRoyle:AlgebraicGraphTheory}. 

Say $G=(V,E)$ is a finite, connected plane graph with medial graph having $c$ components. In Lemma \ref{Generators for P}, we described a $c-1$ dimensional space $\mathcal{P} \subseteq U^{\perp} \cap (U^*)^{\perp}$. In the plane setting since $U^*=U^{\perp}$ for plane graphs, the spaces $U\cap U^{\perp}$, $U\cap U^*$, and $\mathcal{P}$ are all the same. (As we will see in the next section, this is not true for arbitrary surfaces.) The vectors $v_1, \ldots, v_c$ described in Section \ref{Space P} span $\mathcal{P}$, and by Lemma 17.3.3 in \cite{GodsilRoyle:AlgebraicGraphTheory} any subset of $c-1$ of these $c$ vectors is a basis.

\begin{theorem} 
Let $v_1, \dots, v_{c-1}$ be the basis for the bicycle space for $G$ described above. For  $j=1, \dots, c-1,$ pick an edge $e_j\in E$ such that $v_j$ has a 1 in the position of $e_j$ and $v_k$ has 0 in the position of $e_j$  for all $k<j$. (Such a set ${\mathcal S} = \{e_1, \dots, e_{c-1}\}$ can be found by Lemma 17.3.2 in \cite{GodsilRoyle:AlgebraicGraphTheory}.) Then the set ${\mathcal S}$ forms a basis for $\GF^{|E|}/(U+U^*).$
\end{theorem}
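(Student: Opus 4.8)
The plan is to realize the quotient $\GF^{|E|}/(U+U^*)$ concretely through the standard bilinear form paired against the generators $v_1, \dots, v_{c-1}$ of the bicycle space, and then to recognize the resulting images of the edge vectors $e_1, \dots, e_{c-1}$ as the columns of a unitriangular matrix. First I would pin down the dimension of the target. By the dimension count in the proof of Theorem \ref{Thm:PlaneGraphs}, in the plane case $\dim \GF^{|E|}/(U+U^*) = |E| - \dim(U+U^*) = \dim(U \cap U^*)$, and since $U^* = U^\perp$ here this equals $\dim \mathcal P = c-1$ by Lemma \ref{Generators for P}(iii). Thus $\mathcal S = \{e_1, \dots, e_{c-1}\}$ has exactly the right cardinality, so it suffices to show that the classes $[e_1], \dots, [e_{c-1}]$ are linearly independent in the quotient.

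Next I would identify $U+U^*$ as the orthogonal complement of $\mathcal P$. Writing $\langle\,\cdot\,,\,\cdot\,\rangle$ for the standard dot product on $\GF^{|E|}$ that defines $\perp$, we have $(U+U^*)^\perp = U^\perp \cap (U^*)^\perp$; in the plane case $U^* = U^\perp$, so $(U^*)^\perp = U$ and hence $(U+U^*)^\perp = U \cap U^\perp = \mathcal P$. Because this form is nondegenerate over $\GF$, double-complementation gives $U + U^* = \mathcal P^\perp$. Consequently the evaluation map
$$\phi \colon \GF^{|E|} \longrightarrow \GF^{c-1}, \qquad w \longmapsto \bigl(\langle w, v_1\rangle, \dots, \langle w, v_{c-1}\rangle\bigr)$$
has kernel exactly $U+U^*$, and it is surjective because $v_1, \dots, v_{c-1}$ are linearly independent. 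Therefore $\phi$ descends to an isomorphism $\bar\phi \colon \GF^{|E|}/(U+U^*) \xrightarrow{\ \sim\ } \GF^{c-1}$, and $\{[e_j]\}$ is a basis of the quotient if and only if $\{\phi(e_j)\}$ is a basis of $\GF^{c-1}$.

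Finally I would compute $\phi(e_j)$ and read off triangularity. Since $e_j$ is the standard basis vector supported at the edge $e_j$, the $i$-th coordinate of $\phi(e_j)$ is $\langle e_j, v_i\rangle$, i.e.\ the entry of $v_i$ in position $e_j$. The defining property of $\mathcal S$ says precisely that this entry is $1$ when $i = j$ and $0$ when $i < j$. Hence the matrix $M = \bigl(\langle e_j, v_i \rangle\bigr)_{i,j}$ whose columns are $\phi(e_1), \dots, \phi(e_{c-1})$ is lower triangular with $1$'s on the diagonal, so $\det M = 1$ and $M$ is invertible over $\GF$. Thus $\phi(e_1), \dots, \phi(e_{c-1})$ form a basis of $\GF^{c-1}$, and pulling back through $\bar\phi$ shows $\mathcal S$ is a basis of $\GF^{|E|}/(U+U^*)$. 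The one point demanding care, and the only real obstacle, is the orthogonality bookkeeping of the middle step: one must invoke the plane-specific identity $U^* = U^\perp$ together with nondegeneracy of the form to conclude $U + U^* = \mathcal P^\perp$ exactly, since the entire argument hinges on pairing the quotient against the explicit generators $v_i$ of $\mathcal P$.
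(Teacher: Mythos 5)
Your proposal is correct and is essentially the paper's own argument: both hinge on the duality identity $U+U^* = \mathcal{P}^{\perp}$ (the paper phrases it dually as $(U\cap U^*)^{\perp} = U^{\perp} + (U^*)^{\perp} = U+U^*$) together with the triangular pairing $\langle e_j, v_i\rangle$, which the paper uses to show no nonzero combination $\sum_j \alpha_j e_j$ lies in $U+U^*$ and you repackage as invertibility of a unitriangular matrix. Your version merely makes explicit the isomorphism $\GF^{|E|}/(U+U^*) \cong \GF^{c-1}$ and the dimension count that the paper leaves implicit.
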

\begin{proof}
By construction, for each $j \in \{1, \dots, c-1 \}$, the vector $e_j$ is not perpendicular to the bicycle $v_j$. Thus \cite{GodsilRoyle:AlgebraicGraphTheory} :
$$e_j \not \in (U \cap U^*)^{\perp} = U^{\perp} + (U^*)^{\perp} = U^* + U.$$ 
Also by construction, any linear combination $$\sum_{j=1}^{c-1} \alpha_j e_j, \quad \mbox{with} \quad \alpha_i \in \GF$$ is not contained in $U + U^*$ either, and the vectors in $\mathcal S$ are linearly independent in, and form a basis for $$\GF^{|E|}/(U+U^*).$$
\end{proof}

\section{Graphs on orientable surfaces}
\label{section:surfaces}

Let $G=(V,E)$ be a graph with a cellular embedding on an orientable surface $\Sigma$ of genus $g$. We want to compute the number of equivalence classes of bicolorings of $G$ under the two moves described in Theorem \ref{Thm:PlaneGraphs}. While the general solution to this problem is not as simple as the genus 0 case, we can obtain an answer by looking at certain maps on homology.

Since $G$ is cellularly embedded on $\Sigma$, so is its dual graph $G^*=(V^*,E^*)$. The edges in $E^*$ are in $1$-$1$-correspondence to the edges of $E$. A face of $G^*$ corresponds to a vertex $v$ of $G$ and is bounded by edges in $E^*$ that correspond to edges in $E$ adjacent to $v$.  Assume a fixed ordering of $E$, and the induced order on $E^*$. 
The cocycle space $U$ of $G$ has dimension $|V|-1$, and the cocycle space  $U^*$ of $G^*$ has dimension $|V^*|-1$. As discussed in the example in Section \ref{section:example}, the cocycle space $U^*$ is perpendicular to $U$, so $U^* \subseteq U^{\perp}.$ The dimension of $U^{\perp}$ is $|E|-\dim U.$ By assumption on the Euler characteristic of the surface we
have $$|V|-|E|+|V^*| = 2 - 2 g.$$ Therefore, the quotient space $U^{\perp}/U^*$ has dimension $2g$. In fact, the space $U^{\perp}/U^*$ is the first homology group $H_1(\Sigma, \GF)$ of $\Sigma$ with coefficients in the field $\GF$. 

The map $\varphi: U^{\perp} \longrightarrow H_1(\Sigma, \GF)$
can be constructed as follows (\cite{BCFN:Homology}, and compare with \cite{Eppstein:EmbeddedGraphs}): Chose a spanning tree $T$ of $G$ and a spanning tree $C$ of $G^*$ such that none of the edges of $C$ are dual to the edges of $T$. Such a spanning tree $C$ is called a co-tree for $T$. There are exactly $2g$ edges of $G^*$ that are neither in $C$ nor dual to edges in $T$; denote these by $e_1^*, \dots, e_{2g}^*$. 

For $j=1, \dots, 2g,$ let $p_j\in\GF^{|E|}$ be the unique cycle in $C \cup e_j^*.$ Then for a vector $u \in U^{\perp}$ the image $\varphi (u)$ can be expressed by \cite{BCFN:Homology}:
$$\varphi(u) = ( \langle p_1, u \rangle, \dots, \langle p_{2g}, u \rangle ),$$ where $\langle \cdot , \cdot \rangle$ is the inner product in $\GF^{|E|}$. By construction, we have $\ker \varphi = U^*$.
Using the same construction but switching the roles of $G$ and $G^*$, we define a map $$\varphi^*:(U^*)^{\perp} \rightarrow H_1(\Sigma, \GF).$$ In this case $\ker \left(\varphi^*\right) = U$.

\subsection{The intersection $U \cap U^{*}$ as a subspace of ${\mathcal{P}}$}

We once again return to the space $\mathcal P$ from Section \ref{Space P}. If $G$ has a medial graph with $c$ components, the space $\mathcal P$ is generated by the vectors $v_1, \ldots, v_c$ which are the cycles in $G$ and $G^*$ traced out when traversing components of the medial graph. By Lemma \ref{Generators for P}, the space $\mathcal{P}$ is a subspace of $U^{\perp}\cap (U^*)^{\perp}$ of dimension $c-1$.

We will give a proof of a theorem of Lins, Richter and Shank \cite {LinsEtAl:GaussCode} by adapting elegant ideas of Lamey, Silver and Williams \cite{LSW:Bicycles}. 

\begin{theorem}[\cite{LinsEtAl:GaussCode}]
$U \cap U^{*} \subseteq {\mathcal{P}}\subseteq U^{\perp} \cap (U^*)^{\perp}.$
\end{theorem}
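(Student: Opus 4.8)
The plan is to take the right-hand inclusion $\mathcal P \subseteq U^{\perp} \cap (U^*)^{\perp}$ as already established, since it is exactly the content of Lemma \ref{Generators for P} together with the discussion immediately preceding the theorem. All of the work then goes into the left-hand inclusion $U \cap U^{*} \subseteq \mathcal P$, and I would prove it by translating membership in $U \cap U^{*}$ into a purely local condition on the medial graph and then reading off an explicit expression $w = \sum_{i \in I} v_i$.

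First I would fix $w \in U \cap U^{*}$ and record its two descriptions at once: since $w \in U$ it is the coboundary of a set $S \subseteq V$ of vertices, and since $w \in U^{*}$ it is the coboundary in $G^*$ of a set $S^{*} \subseteq V^{*}$ of dual vertices, i.e. of faces of $G$. Together $S$ and $S^{*}$ amount to a two-coloring $\chi$ of the faces of the medial graph $M$ of $G$, because those faces are precisely the vertices and faces of $G$ (Section \ref{section:medial graph}). Around each vertex of $M$ — that is, each edge $e$ of $G$, which meets its two endpoints $u_1, u_2$ and its two incident faces $f_1, f_2$ in alternating cyclic order — membership $e \in w$ says exactly that $\chi(u_1) \neq \chi(u_2)$, while equality of the two coboundaries forces $\chi(u_1) \neq \chi(u_2)$ if and only if $\chi(f_1) \neq \chi(f_2)$, for every $e$.

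The crux is a local analysis of the interface $\gamma \subseteq E(M)$, the set of edges of $M$ across which $\chi$ changes value. At each vertex of $M$ the compatibility condition rules out exactly the two configurations in which only one of the pairs $\{u_1,u_2\}$, $\{f_1,f_2\}$ is separated by $\chi$; the surviving configurations are precisely those in which $\gamma$ contains none, one opposite pair, or both opposite pairs of the four edges incident to that vertex. In other words, $\chi$ is never allowed to make $\gamma$ \emph{turn} at a crossing: the interface either runs straight through or crosses itself there. Since the straight-ahead pairing of edges at the vertices of $M$ is exactly the pairing that defines the medial curves $k_1, \dots, k_c$, this no-turning property forces $\gamma$ to be the union of the edge sets of a sub-collection $\{ k_i : i \in I \}$ of the medial curves.

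Finally I would match supports crossing by crossing. Walking through the local cases at a vertex $e$ — a self-crossing of one component, or a crossing of two distinct components, together with which of the involved components lie in $I$ — one checks in every case that the coefficient of $e$ in $\sum_{i \in I} v_i$ over $\GF$ equals the indicator of $e \in w$, with the two subtle cases being self-crossings and crossings of two components both in $I$, where the two passages cancel modulo $2$. This gives $w = \sum_{i \in I} v_i \in \mathcal P$ and closes the inclusion. I expect the main obstacle to be precisely this bookkeeping: arranging the alternating local picture at a vertex of $M$ so that the forbidden turn configurations are visibly the ones excluded by $w \in U \cap U^{*}$, and then tracking the mod-$2$ cancellations carefully enough that the straight-ahead decomposition of $\gamma$ reproduces $w$ on the nose, and not merely up to the relation $\sum_i v_i = 0$ of Lemma \ref{Generators for P}.
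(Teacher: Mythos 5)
Your proposal is correct and is essentially the paper's own argument (adapted from Lamey--Silver--Williams) in different clothing: your two-coloring $\chi$ of the faces of the medial graph is exactly the paper's vertex/face potential assignment $x_i, x_i^*$, your interface $\gamma$ is the support of the paper's arc labels $\beta = x_i + x_i^*$, your no-turning analysis is the identity $\beta_1=\beta_3$, $\beta_2=\beta_4$, and your crossing-by-crossing matching is the paper's computation $\beta_1+\beta_2 = u_i$. The only cosmetic difference is that you get well-definedness of $\chi$ directly from the coboundary description $w=\delta S$, where the paper derives it from path-independence via $\langle c,u\rangle = 0$ for all cycles $c$.
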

\begin{proof}
The inclusion $\mathcal{P} \subseteq U^{\perp} \cap (U^*)^{\perp}$ is by definition of $\mathcal P$.
Let $u \in \GF^{|E|}$ be in $U$, and recall that $U^{\perp}$ is the space of all cycles in the graph $G$. Thus, we have 
\begin {equation} \label{cycle condition}
\langle c, u \rangle =0 \mbox{ for all cycles }  c \mbox{ in } G.
\end{equation}
If $u \in U \cap U^*$ then additionally 
\begin{equation}
\langle c^*, u \rangle =0 \mbox{ for all cycles } c^* \mbox{ in } G^*.
\end{equation}

As in \cite{LSW:Bicycles} this allows us to construct for $u \in U \cap U^*$ a pair $(v, v^*)=(v(u), v^*(u))$ of vectors in $\GF^{|V|} \times \GF^{|V^*|}$ as follows. Fix a vertex $x_0$ in $G$ and an adjacent vertex $x_0^*$ in $G^*$, and assign values in $\GF$ to these two vertices. To simplify the notation we will denote these values by $x_0$ and $x_0^*$ as well. To obtain an assignment to any other vertex $x_{k}$ in $G$ choose a path $x_0. \dots, x_i, x_{i+1}, \dots, x_k$ from $x_0$ to $x_{k}$.
The value of $x_{i+1}$ is determined by $x_{i+1}=x_i+u_i$, where $u_i$ is the edge assignment given by $u=(u_i)_{i=1, \dots, |E|}$, see Figure \ref{vertex assignments}. 

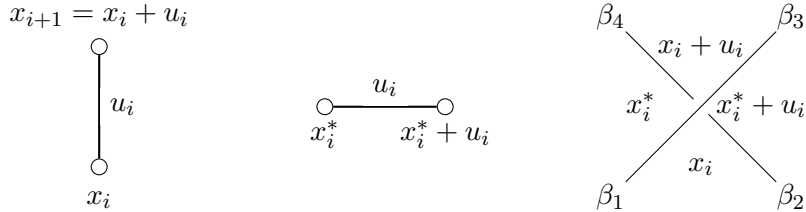
\begin{figure}[h]
\begin{center}
\begin{tikzpicture}[baseline=25]
\vertex [label={$x_{i+1}=x_i+u_i$}](p1) at (1,1.8) {};
\vertex [label={[yshift=-0.8cm] $x_{i}$}](p2) at (1,0.2) {};
\Edge[label={$u_i$}, labelstyle=right](p1)(p2);
\begin{scope}[xshift=3.8 cm, baseline=25]
\vertex [label={[yshift=-0.8cm] $x_i^*+u_i$}](p1) at (1.8, 1) {};
\vertex [label={[yshift=-0.8cm] $x^*_{i}$}](p2) at (0.2, 1) {};
\Edge[label={$u_i$}, labelstyle=above](p1)(p2);
\end{scope}
\begin {scope}[xshift=8cm, baseline = 25]
\draw (0,0)--(2,2);
\draw (1,1.8) node{$x_i+u_i$};
\draw (1,0.2) node{$x_i$};
\draw (1.8,1) node{$x_i^*+u_i$};
\draw (0.2,1) node{$x_i^*$};
\draw (2,0)--(1.1,0.9);
\draw (0.90, 1.1)--(0,2);
\draw (-0.2,-0.2) node{$\beta_1$};
\draw (2.2, -0.2) node{$\beta_2$};
\draw (-0.2, 2.2) node{$\beta_4$};
\draw (2.2, 2.2) node{$\beta_3$};
\end{scope}
\end{tikzpicture}
\end{center}
\caption{An edge in the graph, in its dual graph and the corresponding crossing in the medial graph. The assignments to vertices and faces are: $x_{i+1}=x_i+u_i, x^*_{i+1}=x^*_i+u_i, \beta_1=\beta_3=x_i+x_i^*$ and $\beta_2=\beta_4=x_i+x_i^*+u_i$} \label{vertex assignments}
\end{figure}

By Equation (\ref{cycle condition}) these assignments to the vertices do not depend on the chosen path. Similarly, the assignments to the vertices of $G^*$ are determined by $u$, see Figure \ref{vertex assignments}. The assignments to the vertices in the graph and the dual graph give us assignments to the faces of the medial graph. The next step is to assign values $\beta_i$ to the arcs of the medial graph
by taking the sum of the values of the adjacent faces, see Figure \ref{vertex assignments}. We see that this is well-defined, i.e. $\beta_1=\beta_3$, but also that the values of the two under-arcs are equal: $\beta_2=\beta_4$. This means that along a component of the medial graph all arcs are assigned the same value, and this assignment yields an element of ${\mathcal P}$. The last step is to express the initial vector $u$ as an element of 
${\mathcal P}$. At the crossing the sum of the values assigned to the two components is $\beta_1+\beta_2=x_i+x_i^*+x_i+x_i^*+u_i=u_i$. Thus the sum, over all crossings, of the components of the link with multiplicities given by their assigned values is $u$. 
\end{proof}

Recall the maps $\varphi: U^{\perp} \longrightarrow H_1(\Sigma, \GF)$ and  $\varphi^*: (U^*)^{\perp} \longrightarrow H_1(\Sigma, \GF)$ from the beginning of this section.  The kernel of $\varphi$ is $U^*$, and the kernel of $\varphi^*$ is $U$. Since $\mathcal{P} \subseteq U^{\perp} \cap (U^*)^{\perp}$, we can restrict both $\varphi$ and $\varphi^*$ to the common domain $\mathcal{P}$.  
In general if an element of $U^{\perp} \cap (U^*)^{\perp}$ is in the kernel of $\varphi$ it is not necessarily in the kernel of $\varphi^*$. For example, in the graph depicted in Figure \ref{fig:TicTacToeGraph} the edges $\{e_1,e_2,e_3,e_4\}$ form a cycle both in the graph and its dual graph. This cycle is in the kernel of $\varphi$ but not $\varphi^*$.

The elements in $\mathcal P$, however, are represented by sums of cycles which are homologous in $G$ and $G^*$, and the maps $\varphi$ and $\varphi^*$ restricted to $\mathcal P$ differ at most by a change of basis. Hence, $\ker \left(\varphi |_{\mathcal{P}}\right)$ and $\ker \left(\varphi^* |_{\mathcal{P}}\right)$ both consist of the nulhomologous elements of $\mathcal{P}$. Thus, $\ker \left(\varphi |_{\mathcal{P}}\right) = \ker \left(\varphi^* |_{\mathcal{P}}\right) = U\cap U^*$, and we get the following theorem.

\begin{theorem}
Let $G = (V,E)$ be a finite, cellularly embedded graph on a closed, orientable surface $\Sigma$ of genus $g$. Let $\mathcal{P}$ and $\varphi$ be as described above with $b = \dim \ker \left(\varphi |_\mathcal{P}\right)$. Then the number of equivalence classes in the color changing game for $G$ on $\Sigma$ is $2^{2g + b}$.
\end{theorem}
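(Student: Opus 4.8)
The plan is to assemble the stated count from the linear-algebraic dictionary set up in Section~\ref{section:example} together with the identification of $U \cap U^*$ established in the paragraphs immediately preceding the statement, so the proof is essentially bookkeeping on top of the substantive work already done.

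First I would recall the translation of the game into linear algebra. A vertex move adds a row of the incidence matrix $\mathcal I$, i.e.\ an element of the cocycle space $U$, and a face move adds a row of $\mathcal I^*$, i.e.\ an element of $U^*$. Hence two bicolorings are equivalent precisely when their difference lies in $U + U^*$, so the equivalence classes are the cosets of $U+U^*$ in $\GF^{|E|}$, and their number is $2^{|E| - \dim(U+U^*)}$.

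Next I would evaluate the exponent. By the dimension formula,
$$|E| - \dim(U+U^*) = \bigl(|E| - \dim U - \dim U^*\bigr) + \dim(U \cap U^*).$$
Since $\dim U = |V|-1$ and $\dim U^* = |V^*|-1$, the parenthetical term equals $|E| - |V| - |V^*| + 2$, which by the Euler characteristic relation $|V| - |E| + |V^*| = 2 - 2g$ is exactly $2g$. Thus the number of equivalence classes is $2^{2g + \dim(U \cap U^*)}$. Finally I would invoke the discussion just before the statement, where it is shown that $\ker\!\left(\varphi|_{\mathcal P}\right) = U \cap U^*$, so that $b = \dim \ker\!\left(\varphi|_{\mathcal P}\right) = \dim(U \cap U^*)$; substituting yields $2^{2g+b}$.

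I do not expect a genuine obstacle here, since all the content sits in the preceding results: the theorem of Lins--Richter--Shank placing $U \cap U^*$ inside $\mathcal P$, and the homological computation identifying $U^{\perp}/U^*$ with $H_1(\Sigma,\GF)$ of dimension $2g$ together with $\ker\varphi = U^*$. The only care needed is in the exponent bookkeeping, in particular verifying that the dimension defect $|E| - \dim U - \dim U^*$ is exactly $2g$ and hence splits the answer into the purely topological factor $2^{2g}$ and the bicycle-type factor $2^{b}$.
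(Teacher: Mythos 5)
Your proposal is correct and follows essentially the same route as the paper: the paper leaves this theorem's proof implicit, with the coset count $2^{|E|-\dim(U+U^*)}$ and the exponent computation $|E|-\dim(U+U^*)=2g+\dim(U\cap U^*)$ already carried out in the proof of Theorem \ref{Thm:PlaneGraphs}, and the identification $\ker\left(\varphi|_{\mathcal P}\right)=U\cap U^*$ established in the paragraph immediately preceding the statement. Your bookkeeping (dimension formula plus the Euler characteristic relation $|V|-|E|+|V^*|=2-2g$) matches the paper's exactly.
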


% The next step of this project involves finding a convenient basis for $\ker \varphi |_{\mathcal P}$ that can be used to produce representatives for the equivalence classes in our game on general surfaces. 

It is an interesting question to more directly relate these results to the study of RCC equivalence of cellularly embedded, checkerboard colorable link diagrams on surfaces. We conclude with an example highlighting our results.

\subsection{Example}
Consider the following example of a graph on six vertices and eight edges embedded on a torus:

\begin{center}
\begin{tikzpicture}[scale=0.5]
 \GraphInit[vstyle=Art]
\draw [dashed] (-2,-4) -- (8,-4) -- (8,6) -- (-2,6) -- (-2,-4);
\vertex (p1) at (0,0) {};
\vertex (p2) at (2,2) {};
\vertex (p3) at (4,0) {};
\vertex (p4) at (2,-2) {};
\vertex (p5) at (6,0) {};
\vertex (p6) at (2,4) {};
\Edge [label=$e_1$, labelstyle={left=7pt}](p1)(p2);
\Edge [label=$e_2$, labelstyle={right=7pt}](p2)(p3);
\Edge [label=$e_3$, labelstyle={right=7pt}](p3)(p4);
\Edge [label=$e_4$, labelstyle={left=7pt}](p4)(p1);
\Edge [label=$e_5$, labelstyle=above](p3)(p5);
\Edge [label=$e_6$, labelstyle=above](p5)(8,0);
\Edge [label=$e_6$, labelstyle=above](-2,0)(p1);
\Edge [label=$e_7$, labelstyle=right](p2)(p6);
\Edge [label=$e_8$, labelstyle=right](p6)(2,6);
\Edge [label=$e_8$, labelstyle=right](p4)(2,-4);
\end{tikzpicture}
\end{center}

The cocycle space  $U$ is generated by the row vectors of the incidence matrix 
$$ \mathcal{I}=\left(
\begin{array}{cccccccc}
 1 & 0 & 0 & 1 & 0 & 1 & 0 & 0 \\
 1 & 1 & 0 & 0 & 0 & 0 & 1 & 0 \\
 0 & 1 & 1 & 0 & 1 & 0 & 0 & 0 \\
 0 & 0 & 1 & 1 & 0 & 0 & 0 & 1 \\
 0 & 0 & 0 & 0 & 1 & 1 & 0 & 0 \\
 0 & 0 & 0 & 0 & 0 & 0 & 1 & 1 \\
\end{array}
\right),
$$
and it has dimension $5$.

The cocycle space of the dual graph is generated by the row vectors of the incidence matrix of the dual graph
$$ \mathcal{I}^*=
\left(
\begin{array}{cccccccc}
 1 & 1 & 1 & 1 & 0 & 0 & 0 & 0 \\
 1 & 1 & 1 & 1 & 0 & 0 & 0 & 0 \\
\end{array}
\right),
$$
of dimension $1$.

The Bollob\'as-Riordan-Tutte polynomial of the graph is \cite{vAult:BRTSage}:
$$BRT(x,y,z)=x^5+8 x^4+28 x^3+5 x^2 y+56 x^2+2 x y^2+20 x y+65 x+y^3 z+4 y^2 z+4
   y^2+26 y+36,$$
   and its value $|BRT(-2,-2,1/4)|=|-8|=2^{4-1}$. Hence the medial graph has four components.
  
The vector space $\mathcal P$ of dimension 3 is generated by the vectors $v_1, v_2, v_3, v_4$ corresponding to the cycles traced out by the four components of the medial graph. These vectors are the rows of the following matrix.

$$\left(
\begin{array}{cccccccc}
 1 & 1 & 0 & 0 & 1 & 1 & 0 & 0 \\
 0 & 0 & 1 & 1 & 1 & 1 & 0 & 0 \\
 0 & 1 & 1 & 0 & 0 & 0 & 1 & 1 \\
 1 & 0 & 0 & 1 & 0 & 0 & 1 & 1 \\
\end{array}
\right).
$$

We are interested in $\ker \varphi |_{\mathcal P},$ where $\varphi: U^{\perp} \longrightarrow H_1(\Sigma,\GF).$
For that we fix a spanning tree $T$ of the graph, say $T=\{e_1,e_3,e_4,e_5,e_7\}$ and a co-tree $C$ in the dual graph, say $C=\{e_2^* \}$. Thus, $\{ e_6^*, e_8^* \}$ are the edges neither in $C$ nor dual to edges in $T$. The unique cycle in $C \cup \{e_6^* \}$ is $\{e_6^* \}$, and the unique cycle in $C \cup \{ e_8^* \}$ is $\{e_8^* \}$. So we obtain the following vectors $p_1$ and $p_2$ which are the row vectors of the following matrix.

$$\left(
\begin{array}{cccccccc}
 0 & 0 & 0 & 0 & 0 & 1 & 0 & 0 \\
 0 & 0 & 0 & 0 & 0 & 0 & 0 & 1 \\
\end{array}
\right).
$$

The image of the vector space $\mathcal P$ in $H_1(\Sigma,\GF)$ is determined by:

$$\left(
\begin{array}{cccccccc}
 1 & 1 & 0 & 0 & 1 & 1 & 0 & 0 \\
 0 & 0 & 1 & 1 & 1 & 1 & 0 & 0 \\
 0 & 1 & 1 & 0 & 0 & 0 & 1 & 1 \\
 1 & 0 & 0 & 1 & 0 & 0 & 1 & 1 \\
\end{array}
\right) 
\left(
\begin{array}{cc}
 0 & 0 \\
 0 & 0 \\
 0 & 0 \\
 0 & 0 \\
 0 & 0 \\
 1 & 0 \\
 0 & 0 \\
 0 & 1 \\
\end{array}
\right) =
\left(
\begin{array}{cc}
 1 & 0 \\
 1 & 0 \\
 0 & 1 \\
 0 & 1 \\
\end{array}
\right).
$$  
Thus the image $\varphi({\mathcal P})$ is 2-dimensional. Since $\dim {\mathcal P}=3$ this implies $\dim \ker \varphi |_{\mathcal P}=1$. Finally, since $\ker \varphi |_{\mathcal P} = U \cap U^*$ we conclude the dimension of $\GF^8/(U+U^*)$ is:

\begin{eqnarray*}
|E| -\dim (U+U^*) &=& 2g+  \dim (U \cap U^*)\\
&=& 2g + \dim{(\ker \varphi |_{\mathcal P})} \\
&=& 2+1\\
&=&3.
\end{eqnarray*}

\bibliography{2014Dimers}
\bibliographystyle {amsalpha}

\end{document}